\newtheorem{thm}{Theorem}[section]
\newtheorem{lem}{Lemma}[section]
\newtheorem{rem}{Remark}[section]
\begin{document}
\setcounter{page}{1} 
\vspace{10mm}

\begin{center}
{\LARGE \bf  Some New Results On Even Almost Perfect Numbers Which Are Not Powers Of Two}
\vspace{8mm}

{\large \bf John Rafael M. Antalan$^1$ and Jose Arnaldo B. Dris$^2$}
\vspace{3mm}

$^1$ Department of Mathematics and Physics, Central Luzon State University \\ 
Science City of Munoz (3120) Nueva Ecija, Philippines \\
e-mail: \url{jrantalan@clsu.edu.ph}
\vspace{2mm}

$^2$ Department of Mathematics and Physics, Far Eastern University \\ 
Nicanor Reyes Street, Sampaloc, Manila, Philippines \\
e-mail: \url{josearnaldobdris@gmail.com}
\vspace{2mm}

\end{center}
\vspace{10mm}

\noindent
{\bf Abstract:} In this note, we present some new results on even almost perfect numbers which are not powers of two.  In particular, we show that $2^{r+1} < b$, if ${2^r}{b^2}$ is an even almost perfect number. \\
{\bf Keywords:} Almost perfect number, abundancy index. \\
{\bf AMS Classification:} 11A25.
\vspace{10mm}

\section{Introduction} 

Let $\sigma(x)$ denote the sum of divisors of $x$.  If $\sigma(y)=2y-1$, we say that $y$ is \emph{almost perfect}.

In \cite{Dris}, Dris gives the following criterion for almost perfect numbers in terms of the abundancy index $I(x) = \sigma(x)/x$:

\begin{thm}\label{Dris1}
Let $m$ be a positive integer. Then $m$ is almost perfect if and only if
$$\frac{2m}{m+1} \leq I(m) < \frac{2m+1}{m+1}.$$
\end{thm}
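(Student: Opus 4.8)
The plan is to prove both implications directly from the definition $I(m) = \sigma(m)/m$, using in the harder (``if'') direction the crucial fact that $\sigma(m)$ is always a positive integer. Throughout, $m \ge 1$, so all denominators $m$ and $m+1$ are positive and cross-multiplication preserves inequalities.

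For the forward (``only if'') direction, I would assume $m$ is almost perfect, so that $\sigma(m) = 2m-1$ and hence $I(m) = (2m-1)/m$. It then remains to verify the two inequalities $\frac{2m}{m+1} \le \frac{2m-1}{m}$ and $\frac{2m-1}{m} < \frac{2m+1}{m+1}$. Clearing denominators, the first reduces to $0 \le m-1$ and the second to $-1 < 0$, both of which hold for every positive integer $m$. This settles the easy direction.

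For the reverse direction, I would start from the assumed inequality and multiply throughout by $m > 0$ to trap $\sigma(m)$ itself:
$$\frac{2m^2}{m+1} \le \sigma(m) < \frac{2m^2+m}{m+1}.$$
The key computation is to divide each fraction through by $m+1$, obtaining $\frac{2m^2}{m+1} = 2m-2+\frac{2}{m+1}$ and $\frac{2m^2+m}{m+1} = 2m-1+\frac{1}{m+1}$. Thus $\sigma(m)$ lies in the half-open interval $\left[\,2m-2+\tfrac{2}{m+1},\ 2m-1+\tfrac{1}{m+1}\,\right)$, whose length is $\frac{m}{m+1} < 1$.

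An interval of length strictly less than $1$ contains at most one integer. Since $m \ge 1$ gives $\frac{2}{m+1} \le 1$, the integer $2m-1$ satisfies $2m-1 \ge 2m-2+\frac{2}{m+1}$ and $2m-1 < 2m-1+\frac{1}{m+1}$, so $2m-1$ lies in the interval; as $\sigma(m)$ is also an integer in this interval, the two must coincide, giving $\sigma(m) = 2m-1$ and hence almost perfection. The only subtle point—the main, though mild, obstacle—is recognising that the integrality of $\sigma(m)$ is what forces equality: the inequalities alone merely confine $\sigma(m)$ to a sub-unit-length window, and it is the combination of that length-$<1$ estimate with integrality that pins the value down exactly.
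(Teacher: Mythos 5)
Your proof is correct in both directions. One point of comparison is moot here: the paper does not actually prove this theorem---it quotes the result from \cite{Dris}---so the only question is whether your blind argument stands on its own, and it does. The forward direction is a routine cross-multiplication check (note that your first inequality reduces to $0 \le m-1$, so equality occurs exactly at $m=1$, which is consistent since $1$ is almost perfect). In the reverse direction you correctly identify the key mechanism: multiplying through by $m$ traps the integer $\sigma(m)$ in the half-open interval $\left[2m-2+\tfrac{2}{m+1},\ 2m-1+\tfrac{1}{m+1}\right)$ of length $\tfrac{m}{m+1}<1$, which contains the integer $2m-1$ (using $m \ge 1$) and therefore no other integer, forcing $\sigma(m)=2m-1$. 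This is the standard integrality-squeeze argument that any proof of such a criterion must use in some form---one could equivalently phrase it as two separate squeezes, namely $\sigma(m) > 2m-2$ giving $\sigma(m) \ge 2m-1$, and $\sigma(m) < 2m$ giving $\sigma(m) \le 2m-1$---but your single-interval formulation is clean and complete.
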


Dris also obtains the following result \cite{Dris}:

\begin{thm}\label{Dris2}
Let $M$ be a positive integer. Then $M$ is deficient if and only if
$$\frac{2M}{M+D(M)} \leq I(M) < \frac{2M+D(M)}{M+D(M)},$$
where $D(M)=2M-\sigma(M)$ is the deficiency of $M$.
\end{thm}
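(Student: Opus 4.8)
The plan is to reduce the double inequality to a pair of sign conditions on the deficiency by substituting the definition of $D(M)$ directly. Writing $D = D(M) = 2M - \sigma(M)$, we have $\sigma(M) = 2M - D$ and hence $I(M) = \sigma(M)/M = (2M - D)/M = 2 - D/M$. The first step is to record the two algebraic identities
$$\frac{2M + D}{M + D} - I(M) = \frac{D^2}{M(M+D)}, \qquad I(M) - \frac{2M}{M+D} = \frac{D\,(\sigma(M) - M)}{M(M+D)},$$
both of which follow by putting the terms over the common factor $M(M+D)$ and simplifying: the first numerator collapses to $D^2$, the second to $D(\sigma(M) - M)$. Once these are in hand, the whole theorem becomes a matter of tracking the signs of $D$, of $M + D$, and of $\sigma(M) - M$.

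For the forward direction I would assume $M$ is deficient, so that $D > 0$ and $\sigma(M) < 2M < 3M$, whence $M + D = 3M - \sigma(M) > 0$. Since $M > 0$, $D > 0$, and $\sigma(M) - M \geq 0$ (because $\sigma(M) \geq M$ for every positive integer $M$), both right-hand sides above are nonnegative and the first is strictly positive; this yields $\frac{2M}{M+D} \leq I(M) < \frac{2M+D}{M+D}$ at once.

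For the converse I would assume the double inequality holds and deduce $D > 0$. The strict upper inequality forces $\frac{D^2}{M(M+D)} > 0$; since $M > 0$ and $D^2 \geq 0$, this simultaneously rules out $D = 0$ and forces $M + D > 0$. Feeding $M + D > 0$ into the lower inequality gives $D\,(\sigma(M) - M) \geq 0$, and since $\sigma(M) - M \geq 0$ always and vanishes only at $M = 1$ (where $D = 1 > 0$ directly), this forces $D \geq 0$. Combining $D \neq 0$ with $D \geq 0$ gives $D > 0$, i.e.\ $M$ is deficient.

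I expect the main obstacle to lie in the converse, specifically in the sign of $M + D = 3M - \sigma(M)$: one cannot simply clear denominators in the double inequality without knowing this sign, and an abundant $M$ with $I(M) > 3$ would make $M + D$ negative and reverse the inequalities. The identity for the upper bound is what resolves this cleanly, since its positivity pins down $M + D > 0$ before any cross-multiplication is attempted; the remaining verifications are then routine.
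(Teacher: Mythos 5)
Your proof is correct, but there is nothing in this paper to compare it against: Theorem~\ref{Dris2} is not proved here at all --- it is imported verbatim from the reference [Dris] (the arXiv preprint ``A criterion for deficient numbers using the abundancy index and deficiency functions''), and the present paper only uses it as a black box. So your argument stands as a self-contained proof where the paper supplies none. Checking it: both of your identities are right, since with $\sigma(M)=2M-D$ one computes the numerators
$$M(2M+D)-(M+D)(2M-D)=D^2, \qquad (2M-D)(M+D)-2M^2=D(M-D)=D\bigl(\sigma(M)-M\bigr),$$
using $M-D=\sigma(M)-M$. The forward direction then follows from $D>0$, $M+D=3M-\sigma(M)>M>0$, and $\sigma(M)\geq M$. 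Your converse is also handled correctly, and in the right order: the strict upper inequality forces $D\neq 0$ \emph{and} $M+D>0$ (ruling out the triperfect-type degeneracy and the sign reversal you flag), and only then does the lower inequality yield $D(\sigma(M)-M)\geq 0$, which together with $\sigma(M)-M>0$ for $M>1$ (and the direct check $D(1)=1$) gives $D>0$. Note that the upper inequality alone would not suffice, since a mildly abundant $M$ with $2M<\sigma(M)<3M$ also makes $D^2/\bigl(M(M+D)\bigr)$ positive; your use of the lower inequality to exclude exactly this case is the step that makes the converse work.
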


It is currently an open problem to determine if the only even almost perfect numbers are those of the form $2^k$, where $k \geq 1$.  (Note that $1$ is the single currently known odd almost perfect number, as $\sigma(1) = 2\cdot1 - 1 = 1$.)

Antalan and Tagle showed in \cite{AntalanTagle} that, if $M \neq 2^k$ is an even almost perfect number, then $M$ takes the form $M = {2^r}{b^2}$, where $b$ is an odd composite integer.  Antalan also proved in \cite{Antalan} that $3 \nmid M$.

\section{Main Results}

Our penultimate goal is, of course, to show that if $n$ is an even almost perfect number, then $n = 2^k$ for some positive integer $k$. \\

Assume to the contrary that there exists an even almost perfect number $M \neq 2^k$. By \cite{AntalanTagle}, $M$ then takes the form $M = {2^r}{b^2}$, where $r \geq 1$ and $b$ is an odd composite integer.  Note that $b^2$ is deficient, as it is a factor of the deficient number $M = {2^r}{b^2}$. \\

(The following proof for the assertion that $b^2$ is not almost perfect, is from \cite{MSE-Dris2}.) \\

Since $M$ is almost perfect, we have
$$(2^{r+1} - 1)\sigma(b^2) = \sigma(2^r)\sigma(b^2) = \sigma({2^r}{b^2}) = \sigma(M) = 2M - 1 = {2^{r+1}}b^2 - 1.$$
So we have
$$\sigma(b^2) = \frac{{2^{r+1}}b^2 - 1}{2^{r+1} - 1} = b^2 + \frac{b^2 - 1}{2^{r+1} - 1}.$$
Now,
$$2b^2 - \sigma(b^2) = b^2 -\frac{b^2 - 1}{2^{r+1} - 1}.$$
If $b^2$ is also almost perfect, then we have
$$1 = 2b^2 - \sigma(b^2) = b^2 -\frac{b^2 - 1}{2^{r+1} - 1},$$
which, since $b > 1$, gives
$$2^{r+1} - 1 = 1 \Longleftrightarrow r = 0.$$ \\

This contradicts $r \geq 1$.  Consequently, since $b^2$ is deficient, we can write $\sigma(b^2) = 2b^2 - c$, where $c > 1$. \\

Note that we have proved the following propositions: \\

\begin{lem}
Let $M = {2^r}{b^2}$ be an even almost perfect number, with $\sigma(b^2) = 2b^2 - c$.  Then
$$c = b^2 -\frac{b^2 - 1}{2^{r+1} - 1}.$$
\end{lem}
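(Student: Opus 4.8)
The plan is to work directly from the definition of an almost perfect number together with the multiplicativity of the sum-of-divisors function. First I would observe that, since $b$ is odd, $\gcd(2^r, b^2) = 1$, so $\sigma(M) = \sigma(2^r)\sigma(b^2)$, and record the standard evaluation $\sigma(2^r) = 2^{r+1} - 1$. Substituting the almost perfect condition $\sigma(M) = 2M - 1 = 2^{r+1}b^2 - 1$ then reduces everything to the single linear equation
$$(2^{r+1} - 1)\,\sigma(b^2) = 2^{r+1}b^2 - 1$$
in the one unknown $\sigma(b^2)$.

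Next I would solve this equation for $\sigma(b^2)$ and rewrite the quotient so that its integer and fractional parts become visible. The key algebraic step is the identity
$$\frac{2^{r+1}b^2 - 1}{2^{r+1} - 1} = b^2 + \frac{b^2 - 1}{2^{r+1} - 1},$$
which one verifies by clearing denominators. This isolates the amount by which $\sigma(b^2)$ exceeds $b^2$, and it is exactly the piece that will feed into the deficiency.

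Finally I would invoke the hypothesis $\sigma(b^2) = 2b^2 - c$, which is nothing more than the definition of the deficiency $c$ of $b^2$. Rearranging gives $c = 2b^2 - \sigma(b^2)$, and substituting the expression for $\sigma(b^2)$ found above yields
$$c = 2b^2 - \left(b^2 + \frac{b^2 - 1}{2^{r+1} - 1}\right) = b^2 - \frac{b^2 - 1}{2^{r+1} - 1},$$
which is the claimed formula. I do not expect a genuine obstacle here, since the statement is a direct consequence of multiplicativity and the almost perfect equation; the only points requiring any attention are the coprimality justification for factoring $\sigma(M)$ (which relies on $b$ being odd) and the routine verification of the fractional-part identity, and both are immediate.
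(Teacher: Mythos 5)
Your proposal is correct and follows essentially the same route as the paper: factor $\sigma(M) = \sigma(2^r)\sigma(b^2) = (2^{r+1}-1)\sigma(b^2)$ by multiplicativity, equate it with $2M - 1 = 2^{r+1}b^2 - 1$, solve for $\sigma(b^2) = b^2 + \frac{b^2-1}{2^{r+1}-1}$, and substitute into $c = 2b^2 - \sigma(b^2)$. The only cosmetic difference is that you make the coprimality justification $\gcd(2^r, b^2) = 1$ explicit, which the paper leaves implicit.
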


\begin{lem}
Let $M = {2^r}{b^2}$ be an even almost perfect number, with $\sigma(b^2) = 2b^2 - c$.  Then
$$c \geq \frac{2b^2 + 1}{3}.$$
\end{lem}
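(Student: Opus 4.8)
The plan is to derive the lower bound on $c$ directly from the explicit formula for $c$ supplied by the preceding lemma, using only the constraint $r \geq 1$. Since that lemma gives
$$c = b^2 - \frac{b^2 - 1}{2^{r+1} - 1},$$
the quantity $c$ is an \emph{increasing} function of the denominator $2^{r+1} - 1$ (because $b > 1$ forces $b^2 - 1 > 0$, so the subtracted fraction shrinks as the denominator grows). Hence the smallest value of $c$ compatible with the stated formula occurs at the smallest admissible value of $r$.

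First I would record that $r \geq 1$ implies $2^{r+1} - 1 \geq 2^2 - 1 = 3$. Next, since $b^2 - 1 > 0$, enlarging the denominator can only decrease the positive fraction, so
$$\frac{b^2 - 1}{2^{r+1} - 1} \leq \frac{b^2 - 1}{3}.$$
Substituting this into the formula for $c$ and reversing the inequality direction for the subtracted term yields
$$c = b^2 - \frac{b^2 - 1}{2^{r+1} - 1} \geq b^2 - \frac{b^2 - 1}{3}.$$

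Finally I would simplify the right-hand side over a common denominator, obtaining
$$b^2 - \frac{b^2 - 1}{3} = \frac{3b^2 - (b^2 - 1)}{3} = \frac{2b^2 + 1}{3},$$
which establishes the claimed bound $c \geq (2b^2 + 1)/3$. I do not anticipate a genuine obstacle here: the argument is a one-line monotonicity estimate followed by routine algebra. The only points that must be checked explicitly are that $b > 1$ (guaranteeing $b^2 - 1 > 0$, so the direction of the inequality on the fraction is correct) and that $r \geq 1$ (giving the denominator bound $2^{r+1} - 1 \geq 3$); both are part of the standing hypotheses on $M = 2^r b^2$.
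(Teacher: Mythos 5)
Your proof is correct and matches the paper's own (implicit) argument: the paper derives the formula $c = b^2 - \frac{b^2-1}{2^{r+1}-1}$ in the preceding lemma and then obtains the bound exactly as you do, by using $r \geq 1$ (so $2^{r+1}-1 \geq 3$) together with $b^2 - 1 > 0$ to estimate the subtracted fraction. Your write-up simply makes explicit the monotonicity step that the paper leaves to the reader.
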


Notice that, since $b$ is an odd composite, and since $3 \nmid M$ (see \cite{Antalan}), then $b \geq 5\cdot{7} = 35$, so that we have the estimate $c \geq \frac{2\cdot{{35}^2} + 1}{3} = 817.\overline{333}$, which implies that $c \geq 819$ since $c$ is an odd integer. \\

Recall that the abundancy index of $x$ is defined to be the ratio $I(x) = \frac{\sigma(x)}{x}$.  We call a number $S$ solitary if the equation $I(S) = I(d)$ has exactly one solution $d = S$.  A sufficient (but not necessary) condition for $T$ to be solitary is $\gcd(T, \sigma(T)) = 1$, where $\gcd$ is the greatest common divisor function. \\

The following result was communicated to the second author by Dagal last October 4, 2015. \\

\begin{lem}
If ${2^r}{b^2}$ is an almost perfect number with $\gcd(2,b) = 1$ and $b > 1$, then $b^2$ is solitary.
\end{lem}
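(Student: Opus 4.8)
The plan is to invoke the sufficient condition for solitariness stated just above the lemma: since $T$ is solitary whenever $\gcd(T,\sigma(T)) = 1$, it suffices to prove that $\gcd(b^2,\sigma(b^2)) = 1$. So the whole problem reduces to a coprimality statement about $b^2$ and $\sigma(b^2)$, and I would not need to analyze the solution set of $I(S) = I(b^2)$ directly at all.

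To establish that coprimality, I would reuse the identity already derived in the paragraph preceding Lemma~1.1. Because $\gcd(2^r,b^2)=1$ and $M = 2^r b^2$ is almost perfect, multiplicativity of $\sigma$ gives
$$(2^{r+1}-1)\,\sigma(b^2) = \sigma(2^r)\sigma(b^2) = \sigma(M) = 2M-1 = 2^{r+1}b^2 - 1.$$
Now suppose, for contradiction, that some prime $p$ divides both $b^2$ and $\sigma(b^2)$. From $p \mid b^2$ we get $p \mid 2^{r+1}b^2$; from $p \mid \sigma(b^2)$ we get $p \mid (2^{r+1}-1)\sigma(b^2) = 2^{r+1}b^2 - 1$. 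Subtracting the two multiples of $p$ yields $p \mid 1$, which is impossible.

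Hence no prime divides both, i.e. $\gcd(b^2,\sigma(b^2)) = 1$, and the sufficient condition delivers that $b^2$ is solitary. I do not expect any serious obstacle here: the argument is a one-line gcd cancellation, and the only thing to be careful about is citing the displayed identity correctly (which requires exactly the hypotheses $\gcd(2,b)=1$ and $M$ almost perfect) and remembering that $\gcd(T,\sigma(T))=1$ is merely \emph{sufficient}, so we are free to use it without worrying about its converse. The mild conceptual point, rather than a difficulty, is recognizing that the key identity essentially makes $2^{r+1}b^2$ and $2^{r+1}b^2-1$ play the role of consecutive integers, which is what forces the two quantities to share no common prime factor.
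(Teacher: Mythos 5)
Your proposal is correct and matches the paper's proof in all essentials: both derive the identity $(2^{r+1}-1)\sigma(b^2) = 2^{r+1}b^2 - 1$ from multiplicativity of $\sigma$ and the almost-perfect condition, and both conclude $\gcd(b^2,\sigma(b^2))=1$ to invoke the stated sufficient condition for solitariness. The paper phrases the final step as exhibiting the Bézout combination $1 = (1-2^{r+1})\sigma(b^2) + 2^{r+1}b^2$, while you argue via a hypothetical common prime divisor forced to divide $1$; these are the same one-line cancellation.
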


(Note: The proof that follows is different from that of Dagal's \cite{DagalYmas}.) \\

\begin{proof}
Since ${2^r}{b^2}$ is almost perfect, we have
$$(2^{r+1} - 1)\sigma(b^2) = \sigma(2^r)\sigma(b^2) = \sigma({2^r}{b^2}) = {2^{r+1}}{b^2} - 1.$$
We want to show that
$$\gcd(b^2, \sigma(b^2)) = 1.$$
It suffices to find a linear combination of $b^2$ and $\sigma(b^2)$ that is equal to $1$.  Such a linear combination is given by the equation
$$1 = (1 - 2^{r+1})\sigma(b^2) + {2^{r+1}}{b^2}.$$
\end{proof}

From the equation
$$1 = (1 - 2^{r+1})\sigma(b^2) + {2^{r+1}}{b^2}$$
we obtain
$${2^{r+1}}{\left(\sigma(b^2) - b^2\right)} = \sigma(b^2) - 1$$
so that
$$2^{r+1} = \frac{\sigma(b^2) - 1}{\sigma(b^2) - b^2} = 1 + \frac{b^2 - 1}{\sigma(b^2) - b^2}.$$ \\

This last equation gives the divisibility constraint in the following result:

\begin{lem}
If ${2^r}{b^2}$ is an almost perfect number with $\gcd(2,b) = 1$ and $b > 1$, then 
$$\left(\sigma(b^2) - b^2\right) \mid \left(b^2 - 1\right).$$
\end{lem}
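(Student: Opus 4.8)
The plan is to read off the divisibility directly from the identity
$$2^{r+1} = 1 + \frac{b^2 - 1}{\sigma(b^2) - b^2}$$
derived immediately above from the linear combination $1 = (1 - 2^{r+1})\sigma(b^2) + 2^{r+1}b^2$ of the previous lemma. The entire content of the statement is already encoded in this equation, so my proof will amount to extracting the divisibility claim from it by an integrality argument.

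First I would subtract $1$ from both sides to isolate the fraction, obtaining
$$2^{r+1} - 1 = \frac{b^2 - 1}{\sigma(b^2) - b^2}.$$
Since $r$ is a nonnegative integer, the left-hand side $2^{r+1} - 1$ is a positive integer. Hence the right-hand side must be an integer as well, which is precisely the assertion that $\sigma(b^2) - b^2$ divides $b^2 - 1$. This is the single substantive step, and it is immediate once the displayed equation is in hand.

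The only point I would take care to justify is that this is genuine divisibility in the natural-number sense rather than a merely formal identity between rationals. For this I would observe that $b > 1$ forces both $b^2 - 1 > 0$ and $\sigma(b^2) - b^2 \geq 1 + b > 0$, the latter because $1$ and $b$ are distinct proper divisors of $b^2$. With numerator and denominator both positive and their quotient an integer, the relation $\left(\sigma(b^2) - b^2\right) \mid \left(b^2 - 1\right)$ holds in the usual sense. I do not expect any real obstacle here: the substantive algebraic work was already completed in deriving the displayed equation, and this lemma is essentially that equation reinterpreted through integrality, so the proof should be short.
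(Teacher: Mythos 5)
Your proposal is correct and follows essentially the same route as the paper: the paper also reads the divisibility directly off the identity $2^{r+1} = 1 + \frac{b^2 - 1}{\sigma(b^2) - b^2}$ (itself obtained from the linear combination $1 = (1 - 2^{r+1})\sigma(b^2) + 2^{r+1}b^2$), treating the lemma as an immediate integrality consequence. Your extra care in checking that $\sigma(b^2) - b^2 > 0$ when $b > 1$ is a sound refinement of a step the paper leaves implicit.
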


Numbers $n$ such that $\sigma(n) - n$ divides $n - 1$ are listed in OEIS sequence A059046 \cite{OEIS-A059046}, the first $62$ terms of which are given below:
$$2, 3, 4, 5, 7, 8, 9, 11, 13, 16, 17, 19, 23, 25, 27, 29, 31, 32, 37, 41, 43, 47, 49, 53, 59, 61, 64, 67, 71, 73, 77, 79,$$
$$81, 83, 89, 97, 101, 103, 107, 109, 113, 121, 125, 127, 128, 131, 137, 139, 149, 151, 157, 163, 167, 169, 173,$$
$$179, 181, 191, 193, 197, 199, 211.$$ \\

\begin{rem}
Does OEIS sequence A059046 contain any odd squares $u^2$, with $\omega(u) \geq 2$? MSE user Charles (\url{http://math.stackexchange.com/users/1778}) checked and found that "`there are no such squares with $u^2 < {10}^{22}$."' \cite{MSE-Dris1}
\end{rem}

Suppose that $M = {2^r}{b^2}$ is an almost perfect number with $\gcd(2,b) = 1$ and $b > 1$.  Let us call $b^2$ the 
$\emph{odd part}$ of $M$. \\

The following result shows that distinct even almost perfect numbers (other than the powers of $2$) cannot share the same odd part. \\

\begin{lem}
Suppose that there exist at least two distinct even almost perfect numbers
$$M_1 = {2^{r_1}}{{b_1}^2}$$
and 
$$M_2 = {2^{r_2}}{{b_2}^2},$$
with $\gcd(2,b_1) = \gcd(2,b_2) = 1$, $b_1 > 1$, $b_2 > 1$, and $r_1 \neq r_2$.  Then $b_1 \neq b_2$.
\end{lem}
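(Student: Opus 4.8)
The plan is to prove the contrapositive: I would assume that $b_1 = b_2$ (call this common value $b$) and derive that necessarily $r_1 = r_2$, contradicting the hypothesis $r_1 \neq r_2$. The whole argument rests on the observation, already extracted just before the preceding lemma, that for any even almost perfect number ${2^r}{b^2}$ (with $b$ odd and $b>1$) the exponent is \emph{completely determined by the odd part} via
$$2^{r+1} = 1 + \frac{b^2 - 1}{\sigma(b^2) - b^2}.$$
This identity is the key: its right-hand side depends only on $b$ (through $b^2$ and $\sigma(b^2)$), so two even almost perfect numbers sharing the same odd part must share the same value of $2^{r+1}$.

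Concretely, I would first apply the displayed equation to $M_1$ and to $M_2$ separately, obtaining
$$2^{r_1 + 1} = 1 + \frac{{b_1}^2 - 1}{\sigma({b_1}^2) - {b_1}^2}
\qquad\text{and}\qquad
2^{r_2 + 1} = 1 + \frac{{b_2}^2 - 1}{\sigma({b_2}^2) - {b_2}^2}.$$
Next I would substitute the assumption $b_1 = b_2 = b$; this makes the two right-hand sides literally identical (same numerator $b^2 - 1$ and same denominator $\sigma(b^2) - b^2$, noting the denominator is nonzero since $b^2$ is deficient, indeed $\sigma(b^2) - b^2 = b^2 - c$ with $c$ as in the earlier lemmas). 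Hence $2^{r_1 + 1} = 2^{r_2 + 1}$, and since the base-$2$ exponential is injective on the nonnegative integers, $r_1 + 1 = r_2 + 1$, i.e. $r_1 = r_2$.

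This conclusion $r_1 = r_2$ directly contradicts the standing hypothesis $r_1 \neq r_2$, so the assumption $b_1 = b_2$ is untenable and therefore $b_1 \neq b_2$, as claimed. I do not anticipate a genuine obstacle here; the only point requiring minor care is confirming that the denominator $\sigma(b^2) - b^2$ is strictly positive (so that the quoted identity is well posed), which follows from the deficiency of $b^2$ established at the start of this section. The essential content is simply the recognition that the earlier lemma exhibits $2^{r+1}$ as a function of $b$ alone, so distinct exponents are incompatible with a shared odd part.
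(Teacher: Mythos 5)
Your proof is correct, and it takes a recognizably different route from the paper's. The paper also argues by contradiction from $b_1 = b_2 = b$, but it works with the two numbers \emph{jointly}: it equates the ratio $\sigma(M_1)/\sigma(M_2)$ computed two ways (via $2M_i - 1$ and via multiplicativity), cross-multiplies, and arrives at
$$\left(2^{r_1+1} - 2^{r_2+1}\right) b^2 = 2^{r_1+1} - 2^{r_2+1},$$
then uses $r_1 \neq r_2$ to cancel the nonzero factor and conclude $b^2 = 1$, contradicting $b > 1$. You instead apply the previously derived identity
$$2^{r+1} = 1 + \frac{b^2 - 1}{\sigma(b^2) - b^2}$$
to each number separately, observing that the right-hand side depends only on the odd part, so a shared odd part forces $r_1 = r_2$, contradicting the other hypothesis. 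The two arguments are dual faces of the same dichotomy: the paper's displayed equation says "either $b^2 = 1$ or $2^{r_1+1} = 2^{r_2+1}$," and the paper discharges the second alternative using $r_1 \neq r_2$ while you discharge the first using $b > 1$. What your version buys is modularity and a conceptual slogan --- the exponent is a function of the odd part --- at the cost of depending on the earlier identity (which in turn required $\gcd(\sigma(2^r), \sigma(b^2))$-style manipulation of the almost perfect equation); the paper's version is self-contained algebra needing nothing beyond multiplicativity of $\sigma$. One small refinement: positivity of your denominator $\sigma(b^2) - b^2$ needs only $b^2 > 1$ (any nontrivial number exceeds the sum it contributes to by at least the divisor $1$), so you do not actually need to invoke deficiency of $b^2$ there, though citing it certainly does no harm.
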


\begin{proof}
Assume to the contrary that $1 < b_1 = b_2 = b$.  This implies that $1 < {b_1}^2 = {b_2}^2 = b^2$, so that
$$\frac{{2^{r_1 + 1}}{b^2} - 1}{{2^{r_2 + 1}}{b^2} - 1}= \frac{2M_1 - 1}{2M_2 - 1} = \frac{\sigma(M_1)}{\sigma(M_2)} = \frac{(2^{r_1 + 1} - 1)\sigma(b^2)}{(2^{r_2 + 1} - 1)\sigma(b^2)} = \frac{2^{r_1 + 1} - 1}{2^{r_2 + 1} - 1}.$$ \\

Solving for $b^2$ gives
$$(2^{r_2 + 1} - 1)({2^{r_1 + 1}}{b^2} - 1) = (2^{r_1 + 1} - 1)({2^{r_2 + 1}}{b^2} - 1)$$
$${2^{r_1 + r_2 + 2}}{b^2} - {2^{r_1 + 1}}{b^2} - 2^{r_2 + 1} + 1 = {2^{r_1 + r_2 + 2}}{b^2} - {2^{r_2 + 1}}{b^2} - 2^{r_1 + 1} + 1$$
$$(2^{r_1 + 1} - 2^{r_2 + 1}){b^2} = {2^{r_1 + 1}}{b^2} - {2^{r_2 + 1}}{b^2} = 2^{r_1 + 1} - 2^{r_2 + 1}.$$ \\

By assumption, we have $r_1 \neq r_2$, so that $2^{r_1 + 1} - 2^{r_2 + 1} \neq 0$.  Finally, we get
$$b^2 = 1,$$
which is a contradiction.
\end{proof}

Since $b^2$ is composite, $\sigma(b^2) > b^2 + b + 1$.  In particular, we obtain
$$b^2 - b - 1 > 2b^2 - \sigma(b^2).$$

From the equation
$$2^{r+1} = 1 + \frac{b^2 - 1}{\sigma(b^2) - b^2}$$
and the inequality
$$b^2 + b + 1 < \sigma(b^2),$$
we obtain the following result: \\

\begin{thm}
If ${2^r}{b^2}$ is an almost perfect number with $\gcd(2,b) = 1$ and $b > 1$, then
$$r < \log_{2}{b} - 1.$$
\end{thm}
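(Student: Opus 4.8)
The plan is to chain together the two ingredients the paper has just assembled: the exact identity
$$2^{r+1} = 1 + \frac{b^2 - 1}{\sigma(b^2) - b^2}$$
displayed immediately above the theorem, and the strict inequality $b^2 + b + 1 < \sigma(b^2)$ coming from the compositeness of $b^2$. The strategy is purely to bound $2^{r+1}$ from above and then take logarithms, and the abstract already advertises the key intermediate bound $2^{r+1} < b$, so the whole argument is a short manipulation of inequalities.

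First I would rewrite the compositeness inequality as
$$\sigma(b^2) - b^2 > b + 1 > 0,$$
which shows in particular that the denominator $\sigma(b^2) - b^2$ appearing in the identity is a positive quantity bounded below by $b+1$. Since $b > 1$ forces $b^2 - 1 > 0$, replacing the denominator by the strictly smaller positive quantity $b+1$ can only increase the fraction, and after factoring $b^2 - 1 = (b-1)(b+1)$ the factor $b+1$ cancels:
$$\frac{b^2 - 1}{\sigma(b^2) - b^2} < \frac{b^2 - 1}{b + 1} = \frac{(b-1)(b+1)}{b+1} = b - 1.$$

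Substituting this estimate back into the identity yields $2^{r+1} < 1 + (b-1) = b$, which is precisely the bound $2^{r+1} < b$ promised in the abstract. Finally, applying the (strictly increasing) base-$2$ logarithm to both sides gives $r + 1 < \log_2 b$, that is, $r < \log_2 b - 1$, as claimed.

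I do not anticipate a genuine obstacle here, since the result follows from a single chain of elementary inequalities; the one point demanding care is the direction of the inequality when the denominator is shrunk. This manipulation is valid only because $\sigma(b^2) - b^2$ is \emph{positive} and because $b+1$ is a strict lower bound for it, and the latter in turn relies essentially on $b^2$ being composite (so that $\sigma(b^2) > b^2 + b + 1$ strictly). I would therefore flag the compositeness of $b^2$, established earlier from the fact that $b$ is an odd composite, as the hypothesis doing the real work.
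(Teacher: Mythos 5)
Your proposal is correct and follows essentially the same route as the paper: the paper derives the theorem precisely by combining the identity $2^{r+1} = 1 + \frac{b^2-1}{\sigma(b^2)-b^2}$ with the bound $\sigma(b^2) > b^2 + b + 1$, obtaining $2^{r+1} < 1 + \frac{b^2-1}{b+1} = b$ and then taking logarithms, exactly as you do. Your closing remark is in fact slightly more careful than the paper's own phrasing, since the strict inequality $\sigma(b^2) > b^2 + b + 1$ genuinely requires $b$ itself (not merely $b^2$) to be composite, which is guaranteed by the earlier Antalan--Tagle result.
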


This last inequality implies that
$$2^r < 2^{r+1} < b < \sigma(b)$$
and
$$\sigma(2^r) = 2^{r+1} - 1 < b - 1 < b$$
so that we have
$$\frac{\sigma(2^r)}{b} < 1 < 2 < \frac{\sigma(b)}{2^r}.$$

Additionally, since $b^2$ is deficient, we can write $\sigma(b^2) = 2b^2 - c$, where we compute $c$ to be
$$c = b^2 - \frac{b^2 - 1}{\sigma(2^r)}$$
from which we obtain the upper bound
$$\frac{\sigma(b)}{b} < \frac{\sigma(b^2)}{b^2} < \frac{4}{3}.$$

(Note that $I(b^2) < 4/3$ implies $3 \nmid b$.  For suppose to the contrary that $I(b^2) < 4/3$ and $3 \mid b$.  Then $3^2 \mid b^2$, so that $13/9 = I(3^2) \leq I(b^2) < 4/3$, which is a contradiction. This approach provides an alternative to Antalan's proof \cite{Antalan}.)

Lastly, since $r \geq 1$ and $2 \mid 2^r$, then
$$\frac{3}{2} = \frac{\sigma(2)}{2} \leq \frac{\sigma(2^r)}{2^r},$$
so that we have the following series of inequalities: \\

\begin{thm}
If ${2^r}{b^2}$ is an almost perfect number with $\gcd(2,b) = 1$ and $b > 1$, then
$$\frac{\sigma(2^r)}{b} < 1 < \frac{\sigma(b)}{b} < \frac{4}{3} < \frac{3}{2} \leq \frac{\sigma(2^r)}{2^r} < 2 < \frac{\sigma(b)}{2^r}.$$
\end{thm}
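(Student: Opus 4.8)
The statement is a single chain of seven inequalities relating the quantities $\sigma(2^r)/b$, $\sigma(b)/b$, $\sigma(2^r)/2^r$, and $\sigma(b)/2^r$, and essentially all of its links have already been recorded in the discussion preceding it. My plan is therefore to verify each link separately and then concatenate them. The two substantive ingredients I would lean on are the previous Theorem, which gives $r < \log_{2}b - 1$ and hence the clean inequality $2^{r+1} < b$, and the earlier Lemma asserting $c \geq (2b^2 + 1)/3$, where $\sigma(b^2) = 2b^2 - c$.

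First I would dispose of the links that merely compare powers of two with $b$. From $2^{r+1} < b$ we get $\sigma(2^r) = 2^{r+1} - 1 < b$, so that $\sigma(2^r)/b < 1$, which is the leftmost link. At the opposite end, since $b > 1$ we have $\sigma(b) \geq b + 1 > b > 2^{r+1} = 2\cdot 2^r$, whence $\sigma(b)/2^r > 2$, the rightmost link. For the two central links involving $\sigma(2^r)/2^r$, I would simply compute $\sigma(2^r)/2^r = (2^{r+1}-1)/2^r = 2 - 2^{-r}$; since $r \geq 1$, this value lies in $[3/2,\, 2)$, delivering both $3/2 \leq \sigma(2^r)/2^r$ and $\sigma(2^r)/2^r < 2$ simultaneously.

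The only links carrying genuine content are those bounding $I(b) = \sigma(b)/b$. The lower bound $1 < I(b)$ is immediate, as $1$ and $b$ are distinct divisors of $b$ when $b > 1$. For the upper bound $I(b) < 4/3$, I would invoke monotonicity of the abundancy index: since $b \mid b^2$ and $b < b^2$, we have $I(b) < I(b^2)$, so it suffices to bound $I(b^2)$. Writing $I(b^2) = (2b^2 - c)/b^2 = 2 - c/b^2$ and feeding in $c \geq (2b^2+1)/3 > 2b^2/3$ from the Lemma, I obtain $c/b^2 > 2/3$ and therefore $I(b^2) < 2 - 2/3 = 4/3$. The purely numerical link $4/3 < 3/2$ then closes the chain.

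I do not anticipate a serious obstacle, since the theorem is an assembly of facts already in hand; the one point deserving care is the passage $I(b) < I(b^2) < 4/3$, where both inequalities must be strict. Strictness of $I(b) < I(b^2)$ follows from $b < b^2$ being a proper divisor relation (so that $b^2$ contributes a divisor absent from the divisor set of $b$), and strictness of $I(b^2) < 4/3$ is guaranteed because the Lemma yields $c \geq (2b^2+1)/3$, which strictly exceeds $2b^2/3$. Concatenating all seven verified links then produces exactly the displayed chain.
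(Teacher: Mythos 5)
Your proof is correct and takes essentially the same route as the paper: the outer links come from $2^{r+1} < b$ (the preceding theorem), the middle links from $\sigma(2^r)/2^r = 2 - 2^{-r}$ with $r \geq 1$, and the bound $\sigma(b)/b < \sigma(b^2)/b^2 < 4/3$ from monotonicity of the abundancy index on divisors together with the deficiency of $b^2$. The only cosmetic difference is that you feed in the lemma $c \geq (2b^2+1)/3$ where the paper uses the exact formula $c = b^2 - \frac{b^2-1}{\sigma(2^r)}$ with $\sigma(2^r) \geq 3$; these are interchangeable since the former is an immediate consequence of the latter.
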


We can obtain a tighter lower bound for $\sigma(b^2)/b^2$ via the following method (using the result from Dris \cite{Dris} cited earlier):

\begin{thm}
If ${2^r}{b^2}$ is an almost perfect number with $\gcd(2,b) = 1$ and $b > 1$, then
$$\frac{2b - 1}{2b - 2} < I(b^2).$$
In particular,
$$\sqrt{\frac{2b - 1}{2b - 2}} < I(b).$$
\end{thm}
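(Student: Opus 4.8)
The plan is to prove the two displayed inequalities in turn, with the ``in particular'' clause following from the first by a general squaring estimate for the abundancy index. First I would bound $I(b^2)=\sigma(b^2)/b^2$ from below by $\frac{2b-1}{2b-2}$; then I would show $I(b^2)<I(b)^2$, so that chaining the two and taking positive square roots yields $\sqrt{\frac{2b-1}{2b-2}}<I(b)$.

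For the lower bound on $I(b^2)$, the quickest route uses the inequality $\sigma(b^2)>b^2+b+1$ recorded just above the statement (valid since $b$ is composite), which gives
$$I(b^2)=\frac{\sigma(b^2)}{b^2}>\frac{b^2+b+1}{b^2}.$$
Because $(b^2+b+1)(2b-2)=2(b^3-1)=2b^3-2>2b^3-b^2=b^2(2b-1)$ whenever $b^2>2$, cross-multiplication shows $\frac{b^2+b+1}{b^2}\geq\frac{2b-1}{2b-2}$, hence $\frac{2b-1}{2b-2}<I(b^2)$. To instead lean on Dris's result, I would apply Theorem \ref{Dris2} to the deficient number $b^2$, with deficiency $c=D(b^2)$, obtaining $I(b^2)\geq\frac{2b^2}{b^2+c}$; the target inequality $\frac{2b^2}{b^2+c}>\frac{2b-1}{2b-2}$ is equivalent to $c<\frac{b^2(2b-3)}{2b-1}$, which follows from the relation $c=b^2-\frac{b^2-1}{2^{r+1}-1}$ together with the earlier bound $2^{r+1}<b$ after a short computation.

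For the bridge $I(b^2)<I(b)^2$ I would argue multiplicatively. Both sides are multiplicative in $b$, so it suffices to verify, for each prime power $p^{a}$ exactly dividing $b$, that $I(p^{2a})<I(p^{a})^2$. Writing $I(p^{k})=\frac{p^{k+1}-1}{p^{k}(p-1)}$ and clearing denominators, this reduces to $(p^{a+1}-1)^2>(p-1)(p^{2a+1}-1)$, and the difference of the two sides simplifies exactly to $p(p^{a}-1)^2>0$. Multiplying these factorwise inequalities (strict for at least one factor, since $b>1$) gives $I(b^2)<I(b)^2$. Combining,
$$\frac{2b-1}{2b-2}<I(b^2)<I(b)^2,$$
and taking positive square roots (legitimate as $I(b)>0$) produces $\sqrt{\frac{2b-1}{2b-2}}<I(b)$.

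The main obstacle is the bridge inequality $I(b^2)<I(b)^2$: the first bound is essentially a single cross-multiplication, whereas the square-root passage requires the multiplicative reduction and the prime-power identity $(p^{a+1}-1)^2-(p-1)(p^{2a+1}-1)=p(p^{a}-1)^2$. One must also track strictness carefully, keeping at least one factor inequality strict (guaranteed by $b>1$), so that the conclusion is the strict inequality claimed rather than a non-strict one.
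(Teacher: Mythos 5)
Your proposal is correct, but it diverges from the paper's own proof in both halves, in each case in a way worth noting. For the lower bound, the paper routes through Dris's deficiency criterion (Theorem~\ref{Dris2}): it bounds $\frac{2b^2+1}{3} \leq D(b^2) < b^2 - b - 1$, deduces $I(b^2) > \frac{2b^2}{2b^2 - b - 1}$, and then massages this via $\frac{2b^2}{2b^2-b-1} = 1 + \frac{b+1}{(2b+1)(b-1)} > 1 + \frac{b+1}{2(b+1)(b-1)} = \frac{2b-1}{2b-2}$. Your primary route reaches the same conclusion by a single cross-multiplication from $I(b^2) > \frac{b^2+b+1}{b^2}$, bypassing the deficiency machinery entirely; your computation $(b^2+b+1)(2b-2) = 2b^3 - 2 > 2b^3 - b^2 = b^2(2b-1)$ is correct, and your fallback sketch via $c < \frac{b^2(2b-3)}{2b-1}$ is essentially the paper's argument and also checks out. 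Note that both versions ultimately rest on $\sigma(b^2) > b^2 + b + 1$, hence on $b$ being composite --- a hypothesis not literally in the theorem statement but supplied by the Antalan--Tagle form of $M$, which you correctly flag. For the ``in particular'' clause, the paper simply asserts $\left(I(b)\right)^2 > I(b^2)$ with no justification; you actually prove it, reducing by multiplicativity to prime powers and verifying the identity $(p^{a+1}-1)^2 - (p-1)(p^{2a+1}-1) = p(p^a-1)^2$, which is exactly right, with strictness secured since $b > 1$. So your write-up is more elementary and more self-contained: the paper's choice showcases the abundancy--deficiency bounds it is promoting and recycles its earlier lemmas, while yours fills in the one step the paper leaves unproved. (Minor quibble: your cross-multiplication in fact gives the strict inequality $\frac{b^2+b+1}{b^2} > \frac{2b-1}{2b-2}$ for $b^2 > 2$, so the ``$\geq$'' you wrote there is needlessly weak, though harmless, since $I(b^2) > \frac{b^2+b+1}{b^2}$ is already strict.)
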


\begin{proof}
We start with
$$\frac{2b^2 + 1}{3} \leq D(b^2) < b^2 - b - 1.$$
Since $D(b^2) \geq 819$, we can use the following bounds from \cite{Dris}:
$$\frac{2b^2}{b^2 + D(b^2)} < I(b^2) < \frac{2b^2 + D(b^2)}{b^2 + D(b^2)}.$$
This simplifies to
$$\frac{2b^2}{2b^2 - b - 1} < I(b^2) < \frac{9b^2 - 3b - 3}{5b^2 + 1},$$
from which it follows that
$$\frac{2b^2}{2b^2 - b - 1} = \frac{2b^2 - b - 1}{2b^2 - b - 1} + \frac{b + 1}{2b^2 - b - 1} = 1 + \frac{b + 1}{2b^2 - b - 1} = 1 + \frac{b + 1}{(2b + 1)(b - 1)},$$
of which the last quantity is bounded below by
$$1 + \frac{b + 1}{(2b + 1)(b - 1)} > 1 + \frac{b + 1}{2(b + 1)(b - 1)} = \frac{2b - 1}{2b - 2}.$$
The last assertion in the theorem follows from
$$\left(I(b)\right)^2 > I(b^2).$$
\end{proof}

Proceeding similarly as before, we can prove the following result.

\begin{thm}
If ${2^r}{b^2}$ is an almost perfect number with $\gcd(2,b) = 1$ and $b > 1$, then
\begin{itemize}
{
\item{$r = 1 \Longrightarrow 8/7 < I(b^2) < 4/3 \Longrightarrow 3 \nmid b$; and}
\item{$r > 1 \Longrightarrow I(b^2) < 8/7 \Longrightarrow 7 \nmid b$.}
}
\end{itemize}
\end{thm}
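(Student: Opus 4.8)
The plan is to work directly from the closed form for $\sigma(b^2) - b^2$ that is already available. Rearranging the equation
$$2^{r+1} = 1 + \frac{b^2 - 1}{\sigma(b^2) - b^2}$$
established just before the statement (equivalently, combining the first Lemma with $\sigma(b^2) = 2b^2 - c$) gives
$$\sigma(b^2) - b^2 = \frac{b^2 - 1}{2^{r+1} - 1},$$
and therefore
$$I(b^2) = \frac{\sigma(b^2)}{b^2} = 1 + \frac{b^2 - 1}{b^2\,(2^{r+1} - 1)}.$$
Once this single identity is in hand, both bullet points reduce to reading off the value of $2^{r+1} - 1$ in the two cases.

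For $r = 1$ I would substitute $2^{r+1} - 1 = 3$, giving $I(b^2) = 1 + \frac{b^2 - 1}{3b^2}$. The upper bound $I(b^2) < 4/3$ is immediate because $\frac{b^2 - 1}{3b^2} < \frac{1}{3}$. For the lower bound $I(b^2) > 8/7$, clearing denominators in $1 + \frac{b^2 - 1}{3b^2} > \frac{8}{7}$ reduces to $4b^2 > 7$, which holds trivially since $b$ is an odd integer with $b > 1$, so $b^2 \geq 9$. For $r > 1$, i.e. $r \geq 2$, I would instead use $2^{r+1} - 1 \geq 7$, so that $\frac{b^2 - 1}{b^2(2^{r+1} - 1)} \leq \frac{b^2 - 1}{7b^2} < \frac{1}{7}$, which yields $I(b^2) < 8/7$.

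The divisibility conclusions then follow exactly as in the parenthetical remark on $3 \nmid b$ given earlier in the note. Since the abundancy index is multiplicative and satisfies $I(p^2) \leq I(p^a)$ for every $a \geq 2$, any prime $p$ dividing $b$ forces $I(p^2) \leq I(b^2)$. For $r = 1$, assuming $3 \mid b$ would give $13/9 = I(3^2) \leq I(b^2) < 4/3$, contradicting $13/9 > 4/3$. For $r > 1$, assuming $7 \mid b$ would give $57/49 = I(7^2) \leq I(b^2) < 8/7 = 56/49$, contradicting $57/49 > 56/49$. This establishes $3 \nmid b$ in the first case and $7 \nmid b$ in the second.

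I expect no serious obstacle here: the entire argument is a direct substitution into the closed form for $I(b^2)$ followed by the same multiplicativity/monotonicity observation used previously. The only points requiring minor care are verifying that the lower bound in the $r = 1$ case needs nothing beyond $b > 1$ (so there is no hidden dependence on the sharper estimate $b \geq 35$), and stating cleanly the inequality $I(p^2) \leq I(b^2)$ whenever $p \mid b$, so that the two divisibility contradictions go through from the explicit values $I(3^2) = 13/9$ and $I(7^2) = 57/49$.
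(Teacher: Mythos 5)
Your proposal is correct, and it supplies something the paper itself does not: the paper's ``proof'' of this theorem is only a pointer to a MathOverflow post, with no argument given inline. Your derivation is the natural continuation of the machinery the paper has already set up --- the identity $\sigma(b^2) - b^2 = \frac{b^2-1}{2^{r+1}-1}$, hence $I(b^2) = 1 + \frac{b^2-1}{b^2(2^{r+1}-1)}$, is exactly what the paper uses earlier to get the unconditional bound $I(b^2) < 4/3$, and your case split on $2^{r+1}-1 = 3$ versus $2^{r+1}-1 \geq 7$ refines that bound in precisely the way the theorem asserts. The divisibility conclusions via $I(3^2) = 13/9 > 4/3$ and $I(7^2) = 57/49 > 56/49 = 8/7$ mirror the paper's own parenthetical argument that $I(b^2) < 4/3$ forces $3 \nmid b$, extended to the prime $7$. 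All the individual steps check out: the reduction of the lower bound $8/7 < I(b^2)$ (when $r=1$) to $4b^2 > 7$ is right and indeed needs nothing beyond $b > 1$, and the monotonicity claim $I(p^2) \leq I(b^2)$ for $p \mid b$ is the standard fact that the abundancy index does not decrease on multiples. In short: correct, self-contained, and arguably an improvement on the paper, which outsources this proof entirely to an external link.
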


\begin{proof}
The details of the proof (as well as other relevant hyperlinks) are in the following MathOverflow post:
\url{http://mathoverflow.net/q/238824}.
\end{proof}

\section*{Acknowledgements} 

The authors would like to thank the anonymous referees for their valuable feedback and suggestions that helped improve the overall presentation and style of this paper.
 
\makeatletter
\renewcommand{\@biblabel}[1]{[#1]\hfill}
\makeatother


\begin{thebibliography}{7}


\bibitem{Antalan} Antalan, J.~R.~M., Why even almost perfect number should not be divisible by $3$? A non-almost perfect criterion for even positive integers $n \neq 2^k$, Proceedings of the 3rd International Conference on Mathematical Sciences (2014), AIP Conf. Proc. {\bf 1602}, 881 (2014), doi:\url{http://dx.doi.org/10.1063/1.4882588}.

\bibitem{AntalanTagle} Antalan, J.~R.~M., R.~P.~Tagle, Revisiting forms of almost perfect numbers, preprint (2014).

\bibitem{Dris} J.~A.~B.~Dris, A criterion for deficient numbers using the abundancy index and deficiency functions, preprint (2016), \url{https://arxiv.org/abs/1308.6767v7}.

\bibitem{DagalYmas} Dagal, K.~A.~P., S.~E.~Ymas Jr., A note on perfect numbers and related numbers, preprint (2015).

\bibitem{MSE-Dris1} Dris, J.~A.~B., Does OEIS sequence A059046 contain any odd squares $u^2$, with $\omega(u) \geq 2$?, \url{http://math.stackexchange.com/questions/1466358}.

\bibitem{MSE-Dris2} Dris, J.~A.~B., Is the odd part of even almost perfect numbers (other than the powers of two) not almost perfect?, \url{http://math.stackexchange.com/questions/1474844}.

\bibitem{OEIS-A059046} McCranie, Jud, Numbers $n$ such that $\sigma(n) - n$ divides $n - 1$, \url{http://oeis.org/A059046}.

\end{thebibliography}
\end{document}